\newtheorem{theorem}{Theorem}[section]
\newtheorem{lemma}[theorem]{Lemma}
\theoremstyle{definition}
\newtheorem{definition}[theorem]{Definition}
\newtheorem{example}[theorem]{Example}
\theoremstyle{remark}
\newcommand{\M}{\mathcal{M}}
\newcommand{\N}{\mathbb{N}}
\newcommand{\BB}{{\mathcal B}}
\def\<{\left<}
\def\>{\right>}
\def\<{\left<}
\def\>{\right>}
\def\ll{\langle\kern-3pt\langle}
\def\rr{\rangle\kern-3pt\rangle}
\begin{document}


\title[The Dehn--Sommerville relations and the Catalan matroid]{The Dehn--Sommerville relations and\\ the Catalan matroid}


\author{Anastasia Chavez}
\address{Department of Mathematics, University of California, Berkeley, 970 Evans Hall 3840, Berkeley, CA 94720 USA.}
\curraddr{}
\email{a.chavez@berkeley.edu}
\thanks{Supported in part by NSF Alliances for Graduate Education and the Professoriate}

\author{Nicole Yamzon}
\address{Department of Mathematics, San Francisco State University, 1600 Holloway Ave, San Francisco, CA 94132, USA. }
\curraddr{}
\email{nyamzon@mail.sfsu.edu }
\thanks{Supported in part by NSF GK-12 grant DGE-0841164}

\date{}

\dedicatory{}

\begin{abstract}
The $f$-vector of a $d$-dimensional polytope $P$ stores the number of faces of each dimension. When $P$ is simplicial the Dehn--Sommerville relations condense the $f$-vector into the $g$-vector, which has length $\lceil{\frac{d+1}{2}}\rceil$. Thus, to determine the $f$-vector of $P$, we only need to know approximately half of its entries. This raises the question: Which $(\lceil{\frac{d+1}{2}}\rceil)$-subsets of the $f$-vector of a general simplicial polytope are sufficient to determine the whole $f$-vector? We prove that the answer is given by the bases of the Catalan matroid.
\end{abstract}

\maketitle


\bigskip

\section{Introduction}

It was conjectured by McMullen \cite{mcm} and subsequently proven by Billera--Lee, and Stanley \cite{bla, blb, stan1} that $f$-vectors of simplicial polytopes can be fully characterized by the $g$-theorem. The $g$-theorem determines whether a vector of positive integers is indeed the $f$-vector of some simplicial polytope. The Dehn--Sommerville relations condense the $f$-vector into the $g$-vector, which has length $\lceil{\frac{d+1}{2}}\rceil$. This raises the question: Which $(\lceil{\frac{d+1}{2}}\rceil)$-subsets of the $f$-vector of a general simplicial polytope are sufficient to determine the whole $f$-vector? Define a {\it Dehn--Sommerville basis} to be a minimal subset $S$ such that $\{f_i\mid i\in S\}$ determines the entire $f$-vector for any simplicial polytope. 

For example, using the $g$-theorem one can check that $f_1=(1,8,27,38,19)$ and $f_2=(1,9,28,38,19)$ are $f$-vectors of two different simplicial $4-$polytopes. Therefore the entries $f=(1,*,*,38,19)$ do not determine a simplicial $f$-vector uniquely, and $\{1,4,5\}$ is not a Dehn-Sommerville basis in dimension 4. 

In this paper we prove the following theorem.
\begin{theorem}\label{main2}
The Dehn--Sommerville bases of dimension $2n$ are precisely the upstep sets of the Dyck paths of length $2(n+1)$.
\end{theorem}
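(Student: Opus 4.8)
The plan is to recast the statement as a matroid computation and then to identify that matroid as $\CC_{n+1}$.

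\textbf{Reduction to a matroid.} The $f$-vector and the $h$-vector $(h_0,\dots,h_d)$ of a simplicial $d$-polytope are related by an invertible integral linear substitution, and the Dehn--Sommerville relations are exactly $h_i=h_{d-i}$. Hence for $d=2n$ each face number is a fixed integral linear functional of the $n+1$ free coordinates $h_0,\dots,h_n$ (equivalently $g_0,\dots,g_n$); writing $f_{p-2}=\sum_{j=0}^{n}A_{pj}h_j$ for $p=1,\dots,2n+1$ gives an explicit $(2n+1)\times(n+1)$ matrix $A$ whose entries are the usual binomials. Since the $g$-theorem yields a full-dimensional family of polytopal $g$-vectors, a set $S$ determines the $f$-vector of every simplicial $2n$-polytope if and only if $\{f_{p-2}:p\in S\}$ spans $(\R^{n+1})^{*}$; so a Dehn--Sommerville basis is precisely a basis of the linear matroid $\M$ on the rows of $A$, and in particular has size $n+1$. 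On the Dyck side, the upstep sets of paths of length $2(n+1)$ are exactly the $(n+1)$-subsets $\{p_1<\dots<p_{n+1}\}\subseteq[2n+1]$ obeying the ballot inequalities $p_k\le 2k-1$, and these are the bases of the Catalan matroid $\CC_{n+1}$. Thus Theorem~\ref{main2} says $\M=\CC_{n+1}$.

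\textbf{Every Dehn--Sommerville basis is an upstep set.} I would prove $\operatorname{rank}\{A_p : p\ge 2k\}\le n+1-k$ for all $k$. Klee's form of the Dehn--Sommerville relations provides, for each $\ell$, an identity among $\{f_j : j\ge\ell\}$ whose coefficient on $f_\ell$ is nonzero when $\ell$ is even; the $n+1-k$ relations with $\ell=2k-2,2k,\dots,2n-2$ are supported on the rows $\{p:p\ge 2k\}$ and have distinct leading terms, hence are independent, so they pin the rank of those $2n-2k+2$ rows down to $n+1-k$. Consequently, if $S=\{p_1<\dots<p_{n+1}\}$ violates $p_k\le 2k-1$ for some $k$, i.e.\ $p_k\ge 2k$, then the $n-k+2$ rows $A_{p_k},\dots,A_{p_{n+1}}$ lie in a subspace of dimension $\le n+1-k$ and so are linearly dependent, giving $\det A_S=0$. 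This establishes $\operatorname{bases}(\M)\subseteq\operatorname{bases}(\CC_{n+1})$.

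\textbf{Every upstep set is a Dehn--Sommerville basis.} For the reverse inclusion I would show (a) $\M$ is a \emph{shifted} matroid with respect to the order $1<\dots<2n+1$, i.e.\ its family of bases is downward closed in the Gale (coordinatewise) order, and (b) the Gale-maximal candidate $\{1,3,5,\dots,2n+1\}$ is a basis of $\M$, i.e.\ the corresponding maximal minor of $A$ is nonzero. Granting these, every basis of $\CC_{n+1}$ is Gale-below $\{1,3,\dots,2n+1\}$ and hence a basis of $\M$; with the previous step this yields $\M=\CC_{n+1}$ and the theorem. To prove (a) and (b) I would use that the first $n+1$ rows $A_1,\dots,A_{n+1}$ are unit lower triangular in $h_0,\dots,h_n$ (because $f_{p-2}=h_{p-1}+(\text{terms in }h_0,\dots,h_{p-2})$ for $p\le n+1$): row-reducing $A$ by this block does not change $\M$ and puts $A$ in the form $I_{n+1}$ stacked over an $n\times(n+1)$ matrix $B$, and the content of (a)+(b) is that $B$ (whose entries are differences of binomials) has precisely the staircase-of-zeros pattern that realizes $\CC_{n+1}$.

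\textbf{The main obstacle.} The heart of the argument is this last point: determining exactly which maximal minors of the Dehn--Sommerville matrix vanish. One inclusion falls out cheaply from Klee's relations, but verifying shiftedness and the nonvanishing of the key minor — equivalently, carrying out the analysis of the reduced matrix $B$, and making sure the resulting shifted matroid is the Catalan matroid rather than some other nested matroid with the same lower-rank behaviour — is where the real work will be. A promising alternative is an induction on $n$ that mirrors the first-return decomposition of Dyck paths with a corresponding block decomposition of $A$.
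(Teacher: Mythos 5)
Your reduction of the statement to identifying the linear matroid $\M$ on the rows of the $f$-from-$g$ transition matrix with $\CC_{n+1}$ is the right setup, and it is essentially the same object the paper works with (the columns of their Dehn--Sommerville matrix $M_{2n}$). Your first inclusion --- that any Dehn--Sommerville basis must satisfy the ballot inequalities $b_k\le 2k-1$ --- is a genuinely different and perfectly sound argument: you extract it from the row span of Klee's relations, whereas the paper obtains it from a bottleneck count in the lattice digraph whose path matrix is $M_{2n}$ (via Bj\"orklund--Engstr\"om's total nonnegativity result plus Lindstr\"om--Gessel--Viennot). Both give the same rank bound $\operatorname{rank}\{A_p : p\ge 2k\}\le n+1-k$; yours has the advantage of not needing the path-matrix realization, the paper's has the advantage that the same picture handles the other direction for free.

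The gap is in the converse inclusion, and you have correctly identified it as the heart of the problem without closing it. You propose to show (a) $\M$ is shifted with respect to the Gale order and (b) $\{1,3,\dots,2n+1\}$ is a basis, but both are only stated as goals. Establishing (a) is not cheap: shiftedness of a linear matroid is a strong claim (strictly stronger than being a positroid, which is what total nonnegativity of $M_d$ gives you), and it does not follow from the rank bounds you already have --- those only show the \emph{independent} sets are contained in those of a shifted matroid, not that every Gale-minor of a basis is again a basis. Verifying (a) and (b) amounts to proving nonvanishing of an explicit family of minors of a binomial-entry matrix, and nothing in the proposal does that. This is exactly where the paper's graph model earns its keep: for any $B$ satisfying the ballot inequalities it constructs a vertex-disjoint routing from the sources to the sinks in $B$ by greedily taking each path as low as possible, and LGV then forces the corresponding minor to be a positive sum, hence nonzero. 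Your alternative suggestion (induction mirroring the first-return decomposition of Dyck paths) is also plausible and would sidestep the minor computation, but it too is left as a sketch. As written, the proposal proves one inclusion and outlines, without proof, two possible strategies for the other.
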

A similar description holds for the Dehn-Sommerville bases of dimension $2n-1$.

The paper is organized as follows. In Section 2 we define matroids, the Dehn--Sommerville matrix, and the Catalan matroid defined by Ardila \cite{ardila} and Bonin--de Mier--Noy \cite{bmn}. In Section 3 we prove our main result via several lemmas.

\section{Matroids and the Dehn--Sommerville matrix}
\subsection{Matroids}

A matroid is a combinatorial object that generalizes the notion of independence. We provide the definition in terms of bases; there are several other equivalent axiomatic definitions available. For a more complete study of matroids see Oxley \cite{oxley}.

\begin{definition}A {\it matroid} $\M$ is a pair $(E,\BB)$ consisting of a finite set $E$ and a nonempty collection of subsets $\BB=\BB(\M)$ of $E$, called the {\it bases} of $\M$, that satisfy the following properties:
	\begin{enumerate}
		\item[(B1)] $\emptyset \in \BB$
		\item[(B2)] (Basis exchange axiom) If $B_1,B_2\in\BB$ and $b_1\in B_1-B_2$, then there exists an element $b_2\in B_2-B_1$ such that $B_1-\{b_1\}\cup\{b_2\}\in\BB$.
	\end{enumerate} 
\end{definition}
\begin{example}\label{linear}
A key example is the matroid $M(A)$ of a matrix $A$. Let $A$ be a $d\times n$ matrix of rank $d$ over a field $K$. Denote the columns of $A$ by $\mathbf{a_1},\mathbf{a_2},\dots,\mathbf{a_n}\in K^d$. Then $B\subset[n]$ is a basis of $M(A)$ on the ground set $[n]$ if $\{\mathbf{a_i}\mid i\in B\}$ forms a linear basis for $K^d$.
\end{example}

\subsection{The Dehn-Sommerville relations} 
\begin{definition}
Let $P$ be a $d$-dimensional simplicial polytope, that is, a polytope whose facets are simplices. Then define $f(P)=(f_{-1},f_0,f_1,\dots,f_{d-1})$ to be the \textit{$f$-vector of $P$} where $f_i$ is the number of $i$-dimensional faces of $P$. It is convention that $f_{-1}=1$.
\end{definition}

\begin{definition}
For $k\in[0,d]$, the \textit{$h$-vector} of a simplicial polytope is a sequence with elements 
\begin{align*}
h_k=\sum_{i=0}^k(-1)^{k-i}{{d-i}\choose {k-i}}f_{i-1}.
\end{align*}
\end{definition}

\begin{definition}
The \textit{$g$-vector} of a simplicial polytope is a sequence where $g_0=1$ and $g_i=h_i-h_{i-1}$ for $i\in[1,\lfloor{\frac d 2}\rfloor]$.
\end{definition}

The Dehn--Sommerville relations can be stated most simply in terms of the $h$-vector.

\begin{theorem}\label{h}\cite{mcm2}
The $h$-vector of a simplicial $d$-polytope satisfies 
$$h_k=h_{d-k}$$ for $k=0,1,\dots,d$.
\end{theorem}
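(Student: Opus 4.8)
The plan is to derive Theorem~\ref{h} from the classical Dehn--Sommerville relations on the $f$-vector, obtained by a double count of incident face pairs, and then to pass to the $h$-vector by an elementary generating-function manipulation. Two standard facts about polytopes will be taken as input: the reduced Euler characteristic of a simplicial $k$-sphere equals $(-1)^k$; and for a face $F$ of a simplicial $d$-polytope $P$ the link $\mathrm{lk}_{\partial P}(F)$ is the boundary complex of a simplicial polytope of dimension $d-\dim F-1$, hence a simplicial $(d-\dim F-2)$-sphere (the case $F=\emptyset$ being the statement that $\partial P$ is a $(d-1)$-sphere).

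First I would establish the $f$-vector form. Fix $i\in\{-1,0,\dots,d-1\}$ and count pairs $(F,G)$ of faces of $P$ with $F\subseteq G$ and $\dim F=i$. Summing over $G$ first: the faces $G\supseteq F$ correspond bijectively to the faces $H$ of $\mathrm{lk}_{\partial P}(F)$ via $G=F\sqcup H$, with $\dim G=\dim F+\dim H+1$, so since this link is a $(d-i-2)$-sphere,
\[
\sum_{G\supseteq F}(-1)^{\dim G}=(-1)^{i+1}\sum_{H\in\mathrm{lk}_{\partial P}(F)}(-1)^{\dim H}=(-1)^{i+1}(-1)^{d-i-2}=(-1)^{d-1}.
\]
Summing this over all $i$-dimensional faces $F$ and then regrouping the left-hand side by $\dim G=j$ (a $j$-simplex has $\binom{j+1}{i+1}$ faces of dimension $i$, each of which is a face of $P$) gives
\[
\sum_{j=i}^{d-1}(-1)^{j}\binom{j+1}{i+1}f_j=(-1)^{d-1}f_i,\qquad i=-1,0,\dots,d-1.
\]

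Next I would translate these relations. Encode the $f$-vector by $f(x)=\sum_{i=0}^{d}f_{i-1}x^{i}$; comparing coefficients, a short binomial identity shows the displayed equations are equivalent to the single functional equation $f(x)=(-1)^{d}f(-1-x)$. The definition of the $h$-vector is equivalent to $h(x)=(1-x)^{d}f\!\big(\tfrac{x}{1-x}\big)$, where $h(x)=\sum_{k=0}^{d}h_kx^{k}$. Replacing $x$ by $1/x$ and clearing denominators yields $x^{d}h(1/x)=(x-1)^{d}f\!\big(\tfrac{1}{x-1}\big)$, and with $t=\tfrac{x}{1-x}$ one checks $\tfrac{1}{x-1}=-(1+t)$, so the identity $h(x)=x^{d}h(1/x)$ is equivalent to $(1-x)^{d}f(t)=(-1)^{d}(1-x)^{d}f(-1-t)$, i.e.\ to $f(t)=(-1)^{d}f(-1-t)$. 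Since every step is reversible, the functional equation holds, hence $h(x)=x^{d}h(1/x)$, i.e.\ $h_k=h_{d-k}$ for all $k$.

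The main obstacle is the second input above: knowing that face figures of a simplicial polytope are themselves boundaries of simplicial polytopes of the expected dimension, so that the Euler relation can be applied inside every link; everything downstream is bookkeeping. An alternative that trades this for pure combinatorics is the shelling argument: by Bruggesser--Mani $\partial P$ is shellable, the reverse of a line shelling is again a line shelling, and in a shelling of a pure complex the restriction face of a facet for the reversed order is the complement within that facet of its restriction face for the original order; as $h_i$ counts facets with an $i$-element restriction face and every facet of a simplicial $d$-polytope has $d$ vertices, reversing a shelling interchanges $h_i$ with $h_{d-i}$ without changing the $h$-vector. Either way, a single nontrivial structural fact about polytope boundaries does all the work.
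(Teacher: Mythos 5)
The paper does not prove Theorem~\ref{h} at all; it simply cites it from McMullen~\cite{mcm2} and builds on it. So there is nothing internal to compare against, but your argument is a correct and complete proof, and both routes you describe are the standard textbook ones.

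Your double-counting step is sound: for a proper face $F$ of dimension $i$ in the boundary complex $\partial P$, the faces $G\supseteq F$ are exactly $G=F\sqcup H$ with $H\in\mathrm{lk}_{\partial P}(F)$ and $\dim G=\dim H+i+1$, and since the link is a simplicial $(d-i-2)$-sphere its reduced Euler characteristic $(-1)^{d-i-2}$ gives $\sum_{G\supseteq F}(-1)^{\dim G}=(-1)^{d-1}$. Regrouping by $\dim G=j$ and using that a $j$-simplex has $\binom{j+1}{i+1}$ faces of dimension $i$ gives the classical $f$-vector Dehn--Sommerville relations $\sum_{j=i}^{d-1}(-1)^{j}\binom{j+1}{i+1}f_j=(-1)^{d-1}f_i$ for $i=-1,\dots,d-1$, including the Euler relation at $i=-1$. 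The generating-function translation is also correct: with $f(x)=\sum_{i=0}^{d}f_{i-1}x^i$ the relations are equivalent to $f(x)=(-1)^d f(-1-x)$, and with $h(x)=(1-x)^d f\bigl(x/(1-x)\bigr)$ (which matches the paper's definition $h_k=\sum_{i}(-1)^{k-i}\binom{d-i}{k-i}f_{i-1}$) the substitution $x\mapsto 1/x$ turns this into $h(x)=x^d h(1/x)$, i.e.\ $h_k=h_{d-k}$. Your alternative via Bruggesser--Mani line shellings and complementation of restriction faces under reversal is likewise a correct standard proof. You also correctly flag the one nontrivial structural input: that links of faces of a simplicial polytope boundary are spheres of the expected dimension (equivalently, that the boundary is shellable) is exactly the polytope-specific fact that powers either proof.
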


We now discuss a matrix reformation of Theorem \ref{h}.

\begin{definition} The {\it Dehn-Sommerville matrix},  $M_d$, is defined by
\begin{equation*}
(M_d)_{\substack{0\leq i\leq \lfloor{\frac d2}\rfloor \\ 0\leq j\leq d}}:= {d+1-i \choose d+1-j}-{i \choose d+1-j},
\end{equation*}
for $d\in\N$.

\end{definition}

\begin{theorem}\cite{bjc}
Let $P$ be a simplicial $d$-polytope, and let $f$ and $g$ denote its $f$ and $g$-vectors. Then 
\begin{align*}
g\cdot M_d = f.
\end{align*}
\end{theorem}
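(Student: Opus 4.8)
The plan is to pass through the $h$-vector, combining the classical binomial inversion of the definition of $h_k$ with the Dehn--Sommerville relations of Theorem~\ref{h}. So first I would record the inverse of the linear map $f\mapsto h$: from $h_k=\sum_{i=0}^k(-1)^{k-i}\binom{d-i}{k-i}f_{i-1}$ one recovers
\[
f_{j-1}=\sum_{i=0}^{d}\binom{d-i}{j-i}h_i .
\]
This is a standard Vandermonde-type inversion — the triangular matrices $\bigl[(-1)^{k-i}\binom{d-i}{k-i}\bigr]$ and $\bigl[\binom{d-i}{j-i}\bigr]$ are mutually inverse, which is a one-line check using the Vandermonde identity. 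I would also note $h_0=f_{-1}=1=g_0$.

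Second, I would express every entry of the full $h$-vector in terms of $g$. For $0\le i\le\lfloor d/2\rfloor$ the relation $g_i=h_i-h_{i-1}$ telescopes to $h_i=\sum_{k=0}^{i}g_k$, while for $i>\lfloor d/2\rfloor$ the Dehn--Sommerville relation $h_i=h_{d-i}$ — together with the inequality $d-i\le\lfloor d/2\rfloor$ in this range — gives $h_i=\sum_{k=0}^{d-i}g_k$. Both cases are captured uniformly by
\[
h_i=\sum_{k=0}^{\min(i,d-i)}g_k,\qquad 0\le i\le d .
\]

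Third, I would substitute this into the inversion formula and interchange the order of summation. Since $k\le\min(i,d-i)$ is equivalent to $k\le i\le d-k$, this yields
\[
f_{j-1}=\sum_{k=0}^{\lfloor d/2\rfloor}g_k\sum_{i=k}^{d-k}\binom{d-i}{j-i},
\]
and it remains only to evaluate the inner sum. Rewriting $\binom{d-i}{j-i}=\binom{d-i}{d-j}$ and reindexing by $\ell=d-i$ turns it into $\sum_{\ell=k}^{d-k}\binom{\ell}{d-j}$, which the hockey-stick identity evaluates as $\binom{d+1-k}{d+1-j}-\binom{k}{d+1-j}=(M_d)_{k,j}$. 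Hence $f_{j-1}=\sum_k g_k(M_d)_{k,j}$ for every $j$, i.e.\ $g\cdot M_d=f$.

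The two binomial identities (the inversion and the hockey-stick sum) are routine. The step that needs genuine care is the second one: verifying that $h_i=\sum_{k=0}^{\min(i,d-i)}g_k$ holds for \emph{all} $i\in\{0,\dots,d\}$, and in particular that the indices $k$ that appear never exceed $\lfloor d/2\rfloor$ so that each $g_k$ is actually defined. This is precisely where the Dehn--Sommerville relations are used, and where the floor/ceiling bookkeeping — slightly different in the even and odd cases $d=2n$ and $d=2n-1$ — must be handled correctly.
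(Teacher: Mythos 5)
Your proof is correct. Note that the paper does not actually prove this theorem — it simply cites \cite{bjc} for the matrix reformulation of the Dehn--Sommerville relations — so there is no internal argument to compare against. Your derivation is a sound, self-contained substitute: the inversion $f_{j-1}=\sum_i \binom{d-i}{j-i}h_i$, the telescoping $h_i=\sum_{k\le i}g_k$ on the low range, the use of $h_i=h_{d-i}$ to extend it to $h_i=\sum_{k=0}^{\min(i,d-i)}g_k$ on all of $\{0,\dots,d\}$, and the interchange of sums reducing the inner sum to a hockey-stick difference $\binom{d+1-k}{d+1-j}-\binom{k}{d+1-j}=(M_d)_{k,j}$ all check out, including the inequality $d-i\le\lfloor d/2\rfloor$ for $i>\lfloor d/2\rfloor$ (verified separately for $d$ even and odd) that guarantees each $g_k$ appearing is actually defined.

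One small caveat on presentation rather than substance: calling the inversion of $\bigl[(-1)^{k-i}\binom{d-i}{k-i}\bigr]$ a ``Vandermonde-type'' step is a slight misnomer; the verification actually runs through the trinomial revision $\binom{n}{\ell}\binom{n-\ell}{n-r}=\binom{n}{r}\binom{r}{\ell}$ followed by the alternating-sign binomial sum $\sum_\ell(-1)^\ell\binom{r}{\ell}=[r=0]$. It is still a one-line check, just not a Vandermonde convolution. Also worth stating explicitly is that $\binom{d-i}{j-i}=\binom{d-i}{d-j}$ remains valid (both sides vanish) when $j<i$, which is what licenses extending the $i$-sum from $i\le j$ up to $i\le d$ before interchanging.
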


Label the columns of $M_d$ from 1 to $d+1$. Just as in Example \ref{linear}, we can define the {\it Dehn-Sommerville matroid} of rank $d$, $DS_d$, to be the pair $([d+1],\BB)$ where $B\in\BB$ if $B$ is a collection of columns associated with a non-zero maximal minor.

\begin{example}\label{ex1}
Let $d=4$. Then 
\begin{equation*}
M_4=\left(\begin{matrix} 1&5&10&10&5\\0&1&4&6&3\\0&0&1&2&1\end{matrix}\right)
\end{equation*}
and the bases of $DS_4$ are 
\begin{equation*}
\BB=\{123, 124, 125, 134, 135\}
\end{equation*}
where basis $ijk$ refers to the submatrix formed by columns $i,j$, and $k$ in $M_4$. 
\end{example}

\begin{definition}
Define the graph {\bf DS}$_d$ to be a triangular directed graph (or \textit{digraph}), as shown in Figure \ref{digraphex}, where all horizontal edges are directed east and all vertical edges are directed north. 
\end{definition}

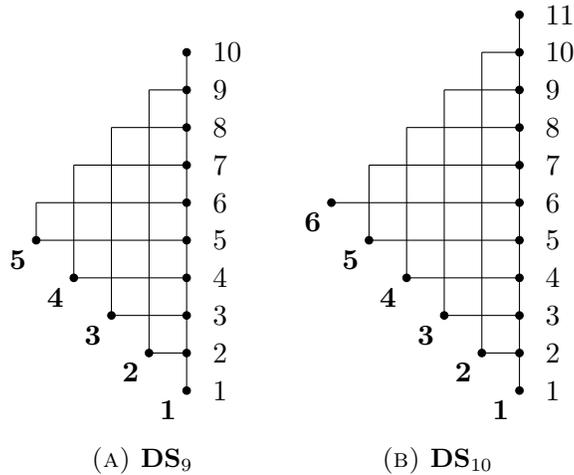
\begin{figure}[htbp]
\centering
\begin{subfigure}[b]{0.3\textwidth}
\begin{tikzpicture}

\draw[thin] (0,5/2) -- (2,5/2) ;
\draw[thin] (0,2) -- (2,2) ;
\draw[thin] (.5,3/2) -- (2,3/2) ;
\draw[thin] (1,1) -- (2,1) ;
\draw[thin] (3/2,.5) -- (2,.5) ;
\draw[thin] (2,0) -- (2,9/2) ;
\draw[thin] (.5,3) -- (2,3) ;
\draw[thin] (1,7/2) -- (2,7/2) ;
\draw[thin] (3/2,4) -- (2,4) ;
\draw[thin] (3/2,.5) -- (3/2,4) ;
\draw[thin] (1,1) -- (1,7/2) ;
\draw[thin] (.5,3/2) -- (.5,3) ;
\draw[thin] (0,2) -- (0,5/2) ;
\draw[black, fill] (2,0) circle (.05cm) node[anchor = north east]{\bf{1}} node[anchor = west]{\: 1} ;
\draw[black, fill] (3/2,.5) circle (.05cm) node[anchor = north east]{\bf{2}};
\draw[black, fill] (1,1) circle (.05cm) node[anchor = north east]{\bf{3}};
\draw[black, fill] (.5,3/2) circle (.05cm) node[anchor = north east]{\bf{4}};
\draw[black, fill] (0,2) circle (.05cm) node[anchor = north east]{\bf{5}};
\draw[black, fill] (2,1/2) circle (.05cm) node[anchor = west]{\: 2};
\draw[black, fill] (2,1) circle (.05cm) node[anchor = west]{\: 3};
\draw[black, fill] (2,3/2) circle (.05cm) node[anchor = west]{\: 4};
\draw[black, fill] (2,2) circle (.05cm) node[anchor = west]{\: 5};
\draw[black, fill] (2,5/2) circle (.05cm) node[anchor = west]{\: 6};
\draw[black, fill] (2,3) circle (.05cm) node[anchor = west]{\: 7};
\draw[black, fill] (2,7/2) circle (.05cm) node[anchor = west]{\: 8};
\draw[black, fill] (2,4) circle (.05cm) node[anchor = west]{\: 9};
\draw[black, fill] (2,9/2) circle (.05cm) node[anchor = west]{\: 10};

\end{tikzpicture}
 \caption{{\bf DS}$_9$}
      
\end{subfigure}
~
\begin{subfigure}[b]{0.3\textwidth}
\begin{tikzpicture}

\draw[thin] (0,5/2) -- (5/2,5/2) ;
\draw[thin] (1/2,2) -- (5/2,2) ;
\draw[thin] (1,3/2) -- (5/2,3/2) ;
\draw[thin] (3/2,1) -- (5/2,1) ;
\draw[thin] (2,1/2) -- (5/2,1/2) ;
\draw[thin] (5/2,0) -- (5/2,5) ;
\draw[thin] (1/2,3) -- (5/2,3) ;
\draw[thin] (1,7/2) -- (5/2,7/2) ;
\draw[thin] (3/2,4) -- (5/2,4) ;
\draw[thin] (2,9/2) -- (5/2,9/2) ;
\draw[thin] (2,1/2) -- (2,9/2) ;
\draw[thin] (3/2,1) -- (3/2,4) ;
\draw[thin] (1,3/2) -- (1,7/2) ;
\draw[thin] (1/2,2) -- (1/2,3) ;
\draw[black, fill] (5/2,0) circle (.05cm) node[anchor = north east]{\bf{1}} node[anchor = west]{\: 1} ;
\draw[black, fill] (2,1/2) circle (.05cm) node[anchor = north east]{ \bf{2}};
\draw[black, fill] (3/2,1) circle (.05cm) node[anchor = north east]{\bf{3}};
\draw[black, fill] (1,3/2) circle (.05cm) node[anchor = north east]{\bf{4}};
\draw[black, fill] (1/2,2) circle (.05cm) node[anchor = north east]{\bf{5}};
\draw[black, fill] (0,5/2) circle (.05cm) node[anchor = north east]{\bf{6}};
\draw[black, fill] (5/2,1/2) circle (.05cm) node[anchor = west]{\: 2};
\draw[black, fill] (5/2,1) circle (.05cm) node[anchor = west]{\: 3};
\draw[black, fill] (5/2,3/2) circle (.05cm) node[anchor = west]{\: 4};
\draw[black, fill] (5/2,2) circle (.05cm) node[anchor = west]{\: 5};
\draw[black, fill] (5/2,5/2) circle (.05cm) node[anchor = west]{\: 6};
\draw[black, fill] (5/2,3) circle (.05cm) node[anchor = west]{\: 7};
\draw[black, fill] (5/2,7/2) circle (.05cm) node[anchor = west]{\: 8};
\draw[black, fill] (5/2,4) circle (.05cm) node[anchor = west]{\: 9};
\draw[black, fill] (5/2,9/2) circle (.05cm) node[anchor = west]{\: 10};
\draw[black, fill] (5/2,5) circle (.05cm) node[anchor = west]{\: 11};
\end{tikzpicture}

        \caption{{\bf DS}$_{10}$}
      
    \end{subfigure}

 \caption{Dehn--Sommerville graphs for odd and even $d$.}\label{digraphex}
\end{figure}

\begin{definition}
Number the nodes along the southwest border $1,\dots, \lceil{\frac{d+1}{2}}\rceil$ in bold, and call them \emph{sources}. Number the nodes along the east border $1,\dots,d+1$, and call them \emph{sinks}.  A \textit{routing} is a set of vertex-disjoint paths on a digraph.
\end{definition}

Considering the collection of routings on {\bf DS}$_d$ produces the following theorem.

\begin{theorem}\label{routing}
A subset $B \subset[d+1]$ is a basis of $DS_d$ if and only if there is a routing from the source set $[\mathbf{\lceil{\frac{d+1}{2}}\rceil}]$ to the sink set $B$ in the graph {\bf DS}$_d$. 
\end{theorem}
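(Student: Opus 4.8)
The plan is to recognize this as an instance of the Lindström–Gessel–Viennot (LGV) lemma and to carefully match the combinatorial data. First I would set up the correspondence between the matrix $M_d$ and the planar digraph $\mathbf{DS}_d$: I claim that the $(i,j)$ entry of $M_d$ equals the number of directed paths from source $i$ to sink $j$ in $\mathbf{DS}_d$. The digraph is essentially a staircase-shaped grid in which monotone lattice paths are counted by binomial coefficients, so the number of paths from source $i$ to sink $j$ should work out to ${d+1-i \choose d+1-j}-{i \choose d+1-j}$; the subtracted term accounts for the staircase boundary, i.e., it is a reflection/inclusion–exclusion correction for paths that would have to pass through the forbidden region cut off by the southwest border. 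I would verify this path count directly, checking the small case $d=4$ of Example \ref{ex1} against Figure \ref{digraphex} as a sanity check, and noting that the entries are (weakly) zero below the staircase, consistent with no available paths.

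Next I would invoke the LGV lemma. Since $\mathbf{DS}_d$ is a planar acyclic digraph with the sources and sinks arranged on the boundary in the standard non-crossing configuration (sources $1,\dots,\lceil\frac{d+1}{2}\rceil$ along the southwest border in order, sinks $1,\dots,d+1$ along the east border in order), the LGV lemma says that for any size-$\lceil\frac{d+1}{2}\rceil$ subset $B\subseteq[d+1]$, the maximal minor of the path-counting matrix obtained by selecting the columns indexed by $B$ equals the (signed, but here sign-definite because of planarity) number of vertex-disjoint routings from the full source set to $B$. In particular, this minor is nonzero if and only if at least one such routing exists. Combining this with the previous step — that the path-counting matrix \emph{is} $M_d$ — gives exactly the statement: $B$ is a basis of $DS_d$, i.e., indexes a nonzero maximal minor of $M_d$, if and only if a routing from $[\lceil\frac{d+1}{2}\rceil]$ to $B$ exists in $\mathbf{DS}_d$.

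The main obstacle I expect is the first step: establishing the precise binomial path-count formula from the geometry of $\mathbf{DS}_d$, including getting the indexing conventions exactly right (the bold source labels increase going \emph{down} the southwest border while sinks increase going \emph{up} the east border, and the graphs for odd versus even $d$ differ slightly in shape). One has to be careful that the paths available from source $i$ are constrained to start at the $i$-th staircase node, so a path to sink $j$ is a monotone lattice path in a region whose lower-left corner is cut by the staircase; the count of such paths is where the two-term binomial expression comes from, and making that bijection or reflection argument clean — rather than just checking small cases — is the real content. A secondary, more routine point is confirming that the source/sink placement genuinely satisfies the non-crossing hypothesis of LGV so that all routings contribute with the same sign and no cancellation occurs; this is visually clear from Figure \ref{digraphex} but should be stated. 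Everything after the path-count identity is then a direct application of a standard theorem.
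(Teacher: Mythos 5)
Your overall plan — apply the Lindstr\"om--Gessel--Viennot lemma to the planar digraph $\mathbf{DS}_d$ — is the same as the paper's, but your first and pivotal step is wrong as stated, and it is wrong in a way you would have caught immediately had you carried out the sanity check you proposed. You claim that the $(i,j)$ entry of $M_d$ equals the \emph{number} of directed paths from source $\mathbf{i}$ to sink $j$. This fails already for source $\mathbf{1}$: source $\mathbf{1}$ sits at the bottom of the east border, so the only edges leaving it go straight north along that border, and there is exactly \emph{one} path from $\mathbf{1}$ to any given sink $j$. But the corresponding matrix entry is $\binom{d+1}{d+1-(j-1)} - \binom{0}{d+1-(j-1)}$, which for $j=2$ equals $d+1$, not $1$ (for $d=4$ this is the entry $5$ in the top row of $M_4$ in Example \ref{ex1}). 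So $M_d$ is \emph{not} the unweighted path-count matrix of $\mathbf{DS}_d$, and no reflection/inclusion--exclusion argument can make it so.

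The paper's proof avoids this by invoking a nontrivial result of Bj\"orklund and Engstr\"om: there exists an assignment of \emph{positive} weights to the edges of $\mathbf{DS}_d$ such that $M_d$ is the \emph{weighted} path matrix, i.e.\ $(M_d)_{ij}$ is the sum over paths from $\mathbf{i}$ to $j$ of the product of edge weights. LGV then expresses each maximal minor of $M_d$ as a sum of products of edge weights over vertex-disjoint routings (with no cancellation, by planarity and positivity), and this sum is nonzero precisely when at least one routing exists. Everything in your second and third paragraphs — the noncrossing source/sink configuration, the sign-definiteness, the conclusion about nonvanishing minors — is fine and matches the paper once the weighted path-matrix identity is in hand, but that identity is the genuine content of the step and is not the naive path count you assumed. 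To repair your argument you would either need to cite the Bj\"orklund--Engstr\"om weighting (as the paper does) or exhibit such positive weights explicitly and verify the path-matrix identity yourself.
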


\begin{proof}
Bj\"{o}rklund and Engst\"om \cite{eng} found a way to give positive weights to the edges of {\bf DS}$_d$ so that $M_d$ is the path matrix of {\bf DS}$_d$; that is, entry $(M_d)_{ij}$ equals the sum of the product of the weights of the paths from source $\mathbf{i}$ to sink $j$. Then, by the Lindstr\"{o}m--Gessel--Viennot lemma, the determinant of columns $\mathbf{a_{i_1}},\dots,\mathbf{a_{i_{\lceil{\frac{d+1}{2}}\rceil}}}$ is the sum of the product of weights of the routing from the source nodes $\mathbf{i_1<\dots<i_{\lceil{\frac{d+1}{2}}\rceil}}$ to the sink nodes  $a_{i_1},\dots,a_{i_{\lceil{\frac{d+1}{2}}\rceil}}$. It follows that the determinant is non-zero if and only if there is a routing.
\end{proof}

\begin{example}\label{routings}
Continuing with $d=4$, we see the destination sets of the routings on digraph {\bf DS}$_4$ match with the bases of $DS_4$ found in Example \ref{ex1}:
    \begin{center}
    \begin{tabular}{c}
        \begin{tikzpicture}
            \draw[black, ultra thick] (.5,.5) -- (1,.5);
            \draw[black, ultra thick] (0,1) -- (1,1);
            \draw[thin] (.5,.5) -- (.5,1.5) ;
            \draw[thin] (.5,1.5) -- (1,1.5) ;
            \draw[thin] (1,0) -- (1,2) ;
            \draw[thin] (.5,1.5) -- (1,1.5) ;
            
            \draw[black, fill] (1,0) circle (.05cm) node[anchor = north east]{\bf{1}} node[anchor = west]{\:1} ;
            \draw[black, fill] (.5,.5) circle (.05cm) node[anchor = north east]{\bf{2}};
            \draw[black, fill] (0,1) circle (.05cm) node[anchor = north east]{\bf{3}};
            \draw[black, fill] (1,.5) circle (.05cm) node[anchor = west]{\:2};
            \draw[black, fill] (1,1) circle (.05cm) node[anchor = west]{\:3};
            \draw[black, fill] (1,3/2) circle (.05cm) node[anchor = west]{\:4};
            \draw[black, fill] (1,2) circle (.05cm) node[anchor = west]{\:5};
            \draw[black] (1,1) circle (.1cm);
            \draw[black] (1,.5) circle (.1cm);
            \draw[black] (1,0) circle (.1cm);
        \end{tikzpicture}
        
         \begin{tikzpicture}
            \draw[black, ultra thick] (.5,.5) -- (1,.5);
            \draw[black, ultra thick] (0,1) -- (1,1);
            \draw[black, ultra thick] (1,1) -- (1,1.5);
            \draw[thin] (.5,.5) -- (.5,1.5) ;
            \draw[thin] (.5,1.5) -- (1,1.5) ;
            \draw[thin] (1,0) -- (1,1) ;
            \draw[thin] (1,1.5) -- (1,2) ;
            \draw[thin] (.5,1.5) -- (1,1.5) ;
            
            \draw[black, fill] (1,0) circle (.05cm) node[anchor = north east]{\bf{1}} node[anchor = west]{\:1} ;
            \draw[black, fill] (.5,.5) circle (.05cm) node[anchor = north east]{\bf{2}};
            \draw[black, fill] (0,1) circle (.05cm) node[anchor = north east]{\bf{3}};
            \draw[black, fill] (1,.5) circle (.05cm) node[anchor = west]{\:2};
            \draw[black, fill] (1,1) circle (.05cm) node[anchor = west]{\:3};
            \draw[black, fill] (1,3/2) circle (.05cm) node[anchor = west]{\:4};
            \draw[black, fill] (1,2) circle (.05cm) node[anchor = west]{\:5};
            \draw[black] (1,1.5) circle (.1cm);
            \draw[black] (1,.5) circle (.1cm);
            \draw[black] (1,0) circle (.1cm);
        \end{tikzpicture}
        
        \begin{tikzpicture}
            \draw[black, ultra thick] (.5,.5) -- (1,.5);
            \draw[black, ultra thick] (0,1) -- (1,1);
            \draw[black, ultra thick] (1,1) -- (1,2);
            \draw[thin] (.5,.5) -- (.5,1.5) ;
            \draw[thin] (.5,1.5) -- (1,1.5) ;
            \draw[thin] (1,0) -- (1,1) ;
            \draw[thin] (.5,1.5) -- (1,1.5) ;
            
            \draw[black, fill] (1,0) circle (.05cm) node[anchor = north east]{\bf{1}} node[anchor = west]{\:1} ;
            \draw[black, fill] (.5,.5) circle (.05cm) node[anchor = north east]{\bf{2}};
            \draw[black, fill] (0,1) circle (.05cm) node[anchor = north east]{\bf{3}};
            \draw[black, fill] (1,.5) circle (.05cm) node[anchor = west]{\:2};
            \draw[black, fill] (1,1) circle (.05cm) node[anchor = west]{\:3};
            \draw[black, fill] (1,3/2) circle (.05cm) node[anchor = west]{\:4};
            \draw[black, fill] (1,2) circle (.05cm) node[anchor = west]{\:5};
            \draw[black] (1,2) circle (.1cm);
            \draw[black] (1,.5) circle (.1cm);
            \draw[black] (1,0) circle (.1cm);
        \end{tikzpicture}
        
        \begin{tikzpicture}
            \draw[black, ultra thick] (.5,.5) -- (1,.5);
            \draw[black, ultra thick] (1,.5) -- (1,1);
            \draw[black, ultra thick] (0,1) -- (.5,1);
            \draw[black, ultra thick] (.5,1) -- (.5,1.5);
            \draw[black, ultra thick] (.5,1.5) -- (1,1.5);
            \draw[thin] (.5,.5) -- (.5,1) ;
            \draw[thin] (.5,1) -- (1,1) ;
            \draw[thin] (.5,1.5) -- (1,1.5) ;
            \draw[thin] (1,0) -- (1,.5) ;
            \draw[thin] (1,1.5) -- (1,2) ;
            \draw[thin] (1,1) -- (1,1.5) ;
            
            \draw[black, fill] (1,0) circle (.05cm) node[anchor = north east]{\bf{1}} node[anchor = west]{\:1} ;
            \draw[black, fill] (.5,.5) circle (.05cm) node[anchor = north east]{\bf{2}};
            \draw[black, fill] (0,1) circle (.05cm) node[anchor = north east]{\bf{3}};
            \draw[black, fill] (1,.5) circle (.05cm) node[anchor = west]{\:2};
            \draw[black, fill] (1,1) circle (.05cm) node[anchor = west]{\:3};
            \draw[black, fill] (1,3/2) circle (.05cm) node[anchor = west]{\:4};
            \draw[black, fill] (1,2) circle (.05cm) node[anchor = west]{\:5};
            \draw[black] (1,1.5) circle (.1cm);
            \draw[black] (1,1) circle (.1cm);
            \draw[black] (1,0) circle (.1cm);
        \end{tikzpicture}
        
         \begin{tikzpicture}
            \draw[black, ultra thick] (.5,.5) -- (1,.5);
            \draw[black, ultra thick] (1,.5) -- (1,1);
            \draw[black, ultra thick] (0,1) -- (.5,1);
            \draw[black, ultra thick] (.5,1) -- (.5,1.5);
            \draw[black, ultra thick] (.5,1.5) -- (1,1.5);
            \draw[black, ultra thick] (1,1.5) -- (1,2) ;
            \draw[thin] (.5,.5) -- (.5,1) ;
            \draw[thin] (.5,1) -- (1,1) ;
            \draw[thin] (.5,1.5) -- (1,1.5) ;
            \draw[thin] (1,0) -- (1,.5) ;
            \draw[thin] (1,1) -- (1,1.5) ;
            
            \draw[black, fill] (1,0) circle (.05cm) node[anchor = north east]{\bf{1}} node[anchor = west]{\:1} ;
            \draw[black, fill] (.5,.5) circle (.05cm) node[anchor = north east]{\bf{2}};
            \draw[black, fill] (0,1) circle (.05cm) node[anchor = north east]{\bf{3}};
            \draw[black, fill] (1,.5) circle (.05cm) node[anchor = west]{\:2};
            \draw[black, fill] (1,1) circle (.05cm) node[anchor = west]{\:3};
            \draw[black, fill] (1,3/2) circle (.05cm) node[anchor = west]{\:4};
            \draw[black, fill] (1,2) circle (.05cm) node[anchor = west]{\:5};
            \draw[black] (1,2) circle (.1cm);
            \draw[black] (1,1) circle (.1cm);
            \draw[black] (1,0) circle (.1cm);
        \end{tikzpicture}
        
    \end{tabular}
    \end{center}
\end{example}

\subsection{The Catalan Matroid} 


\begin{definition}
For $n\in\N$, a {\it Dyck path} of length $2n$ is a path in the plane from $(0,0)$ to $(2n,0)$ with upsteps, $(1,1)$, and downsteps, $(1,-1)$, that never falls below the $x$-axis. The number of Dyck paths of length $2n$ is counted by the Catalan numbers $C_n = \frac{1}{n+1}{2n \choose n}$.
\end{definition}
\begin{definition}\cite{ardila, bmn} The {\it Catalan matroid}, $\mathbf{C}_n$, is a matroid with a ground set of $[2n]$ whose bases are the upstep sets of the Dyck paths of length $2n$. 
\end{definition}
\begin{example}\label{dyckex}
Let $n=3$. Then the bases of the Catalan matroid are the upstep sets of the following Dyck paths of length 6:
\begin{center}
\begin{tabular}{c}
\begin{tikzpicture}
\draw[thin] (0,0) -- (1/4,1/4) ;
\draw[thin] (1/4,1/4) -- (1/2,1/2) ;
\draw[thin] (1/2,1/2) -- (3/4,3/4) ;
\draw[thin] (3/4,3/4) -- (1,1/2) ;
\draw[thin] (1,1/2) -- (5/4,1/4) ;
\draw[thin] (5/4,1/4) -- (3/2,0) ;
\draw[black, fill] (0,0) circle (.05cm);
\draw[black, fill] (1/4,1/4) circle (.05cm);
\draw[black, fill] (1/2,1/2) circle (.05cm);
\draw[black, fill] (3/4,3/4) circle (.05cm);
\draw[black, fill] (1,1/2) circle (.05cm);
\draw[black, fill] (5/4,1/4) circle (.05cm);
\draw[black, fill] (3/2,0) circle (.05cm);
\end{tikzpicture}
\:\:
\begin{tikzpicture}
\draw[thin] (0,0) -- (1/4,1/4) ;
\draw[thin] (1/4,1/4) -- (1/2,1/2) ;
\draw[thin] (1/2,1/2) -- (3/4,1/4) ;
\draw[thin] (3/4,1/4) -- (1,1/2) ;
\draw[thin] (1,1/2) -- (5/4,1/4) ;
\draw[thin] (5/4,1/4) -- (3/2,0) ;
\draw[black, fill] (0,0) circle (.05cm);
\draw[black, fill] (1/4,1/4) circle (.05cm);
\draw[black, fill] (1/2,1/2) circle (.05cm);
\draw[black, fill] (3/4,1/4) circle (.05cm);
\draw[black, fill] (1,1/2) circle (.05cm);
\draw[black, fill] (5/4,1/4) circle (.05cm);
\draw[black, fill] (3/2,0) circle (.05cm);
\end{tikzpicture}
\:\:
\begin{tikzpicture}
\draw[thin] (0,0) -- (1/4,1/4) ;
\draw[thin] (1/4,1/4) -- (1/2,1/2) ;
\draw[thin] (1/2,1/2) -- (3/4,1/4) ;
\draw[thin] (3/4,1/4) -- (1,0) ;
\draw[thin] (1,0) -- (5/4,1/4) ;
\draw[thin] (5/4,1/4) -- (3/2,0) ;
\draw[black, fill] (0,0) circle (.05cm);
\draw[black, fill] (1/4,1/4) circle (.05cm);
\draw[black, fill] (1/2,1/2) circle (.05cm);
\draw[black, fill] (3/4,1/4) circle (.05cm);
\draw[black, fill] (1,0) circle (.05cm);
\draw[black, fill] (5/4,1/4) circle (.05cm);
\draw[black, fill] (3/2,0) circle (.05cm);
\end{tikzpicture}
\:\:
\begin{tikzpicture}
\draw[thin] (0,0) -- (1/4,1/4) ;
\draw[thin] (1/4,1/4) -- (1/2,0) ;
\draw[thin] (1/2,0) -- (3/4,1/4) ;
\draw[thin] (3/4,1/4) -- (1,1/2) ;
\draw[thin] (1,1/2) -- (5/4,1/4) ;
\draw[thin] (5/4,1/4) -- (3/2,0) ;
\draw[black, fill] (0,0) circle (.05cm);
\draw[black, fill] (1/4,1/4) circle (.05cm);
\draw[black, fill] (1/2,0) circle (.05cm);
\draw[black, fill] (3/4,1/4) circle (.05cm);
\draw[black, fill] (1,1/2) circle (.05cm);
\draw[black, fill] (5/4,1/4) circle (.05cm);
\draw[black, fill] (3/2,0) circle (.05cm);
\end{tikzpicture}
\:\:
\begin{tikzpicture}
\draw[thin] (0,0) -- (1/4,1/4) ;
\draw[thin] (1/4,1/4) -- (1/2,0) ;
\draw[thin] (1/2,0) -- (3/4,1/4) ;
\draw[thin] (3/4,1/4) -- (1,0) ;
\draw[thin] (1,0) -- (5/4,1/4) ;
\draw[thin] (5/4,1/4) -- (3/2,0) ;
\draw[black, fill] (0,0) circle (.05cm);
\draw[black, fill] (1/4,1/4) circle (.05cm);
\draw[black, fill] (1/2,0) circle (.05cm);
\draw[black, fill] (3/4,1/4) circle (.05cm);
\draw[black, fill] (1,0) circle (.05cm);
\draw[black, fill] (5/4,1/4) circle (.05cm);
\draw[black, fill] (3/2,0) circle (.05cm);
\end{tikzpicture}
\end{tabular}
\end{center}

\begin{equation*}
\BB=\{123, 124, 125, 134, 135\}
\end{equation*}

\end{example}

\section{Main Result}

\begin{theorem}\label{main}
The Dehn--Sommerville matroids are obtained from the Catalan matroids by removing trivial elements:
\begin{align*}
DS_{2n} \cong \mathbf{C}_{n+1}\backslash (2n+2) \text{ \; and \; }
DS_{2n-1} \cong \mathbf{C}_{n+1}\backslash 1\backslash (2n+2)\;\; \text{ for } n\in\N.
\end{align*}
Note that 1 is a coloop and $2n+2$ is a loop in $\mathbf{C}_{n+1}$.
\end{theorem}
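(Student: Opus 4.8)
The plan is to argue entirely on the graph side using Theorem~\ref{routing}, which identifies the bases of $DS_d$ with the sink sets $B$ that admit a routing from the full source set of $\mathbf{DS}_d$. First I would realize $\mathbf{DS}_d$ as a concrete digraph in the integer lattice: all edges point north or east, the sinks form a vertical segment, and (one checks directly from the definition) the sources are exactly the lattice points of a single anti-diagonal $x+y=c_0$, with the bottom source equal to the bottom sink. The key quantitative fact is that the anti-diagonal $x+y=c_0+\ell$ meets the region in exactly $\lceil\frac{d+1}{2}\rceil-\lceil\ell/2\rceil$ vertices when $d=2n$ (with the analogous count $n-\lceil\frac{\ell-1}{2}\rceil$ when $d=2n-1$).

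The core of the proof is the characterization: a routing to $B=\{b_1<\cdots<b_k\}$ exists in $\mathbf{DS}_{2n}$ iff $b_i\le 2i-1$ for all $i$, and in $\mathbf{DS}_{2n-1}$ iff $b_i\le 2i$ for all $i$. For necessity, note that each path in a routing is a monotone north/east lattice path, hence meets every anti-diagonal in at most one vertex; the path out of source $i$ runs to the sink at level $b_i-1$, so it meets precisely the anti-diagonals $\ell=0,1,\dots,b_i-1$, and vertex-disjointness forces the hit points on a given anti-diagonal to be distinct. Comparing the number of paths crossing the anti-diagonal $\ell$, namely $\#\{i:b_i-1\ge\ell\}=k-|B\cap[\ell]|$, with the number of available vertices $k-\lceil\ell/2\rceil$, one gets $|B\cap[\ell]|\ge\lceil\ell/2\rceil$ for every $\ell$ — the ballot inequalities, which rearrange to $b_i\le 2i-1$ (resp.\ $b_i\le 2i$). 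For sufficiency I would produce, given such a $B$, an explicit routing: route the path from source $i$ as a run of straight east-steps followed by an alternating east/north staircase, calibrated to land at sink $b_i$; the ballot inequalities are exactly what keeps each such path inside the staircase-shaped region, and a further anti-diagonal bookkeeping shows the paths are pairwise vertex-disjoint. (Alternatively, the planar non-crossing principle reduces the existence of a routing to the existence of any family of vertex-disjoint source-to-$B$ paths, and Menger's theorem then reduces this to the absence of a vertex cut of size $<k$, which follows from the ballot inequalities by inducting on $n$ and peeling off the forced bottom path.) I expect this sufficiency direction — choosing the paths correctly and verifying region-containment and disjointness — to be the main obstacle; necessity is a one-line count once the anti-diagonal picture is set up.

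Finally I would translate the reachability characterization into the matroid statement. For $d=2n$, send a reachable $B\subseteq[2n+1]$ to the word on $\{U,D\}$ with a $U$ in position $j$ exactly when $j\in B$, followed by a $D$ in position $2n+2$: the ballot inequalities say exactly that this word is a Dyck path of length $2(n+1)$, so the bases of $DS_{2n}$ are precisely the upstep sets of length-$2(n+1)$ Dyck paths that miss position $2n+2$, i.e.\ the bases of $\mathbf{C}_{n+1}\backslash(2n+2)$; since every Dyck path ends in a downstep, $2n+2$ lies in no basis of $\mathbf{C}_{n+1}$, so it is a loop and deleting it leaves the bases unchanged. For $d=2n-1$ the graph has one fewer source and one fewer sink, the bottom sink is still forced (every Dyck path begins with an upstep, so $1$ is a coloop of $\mathbf{C}_{n+1}$), and deleting this coloop too — equivalently, shifting all indices down by one — turns the condition $b_i\le 2i$ on $[2n]$ into the Catalan condition, yielding $DS_{2n-1}\cong\mathbf{C}_{n+1}\backslash 1\backslash(2n+2)$. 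The rank on each side then matches ($\lceil\frac{d+1}{2}\rceil$ sources on the left; $n+1$ minus the number of deleted coloops on the right), and the loop and coloop statements are exactly the two Dyck-path facts used above.
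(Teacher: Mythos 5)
Your proposal is correct and takes essentially the same route as the paper: both reduce to Theorem~\ref{routing} and then use a bottleneck/pigeonhole count on anti-diagonals of $\mathbf{DS}_d$ to show that a routing to $B$ exists exactly when $b_i\le 2i-1$ (for $d=2n$), which is then matched to the ballot characterization of Dyck paths (Lemma~\ref{catalan}). The only structural difference is minor: the paper first proves $DS_{2n}\cong DS_{2n-1}\oplus C$ (Lemma~\ref{bij}) so that the ballot inequality need only be established for the even case, while you characterize both parities directly via $b_i\le 2i-1$ and $b_i\le 2i$; and where you hedge on the sufficiency direction (offering both a greedy staircase construction and a Menger-style fallback), the paper commits to the greedy ``route each path as low as possible'' construction and invokes the same diagonal count to see it never gets stuck.
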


To prove our main result, we first set the stage with some preliminary lemmas. We first show there is a close connection between odd and even Dehn--Sommerville matroids. We  then characterize the bases of the Dehn--Sommerville and Catalan matroids, respectively. Finally, we prove our main result.

\begin{lemma}\label{bij}
The bases of the Dehn--Sommerville matroid $DS_{2n}$ are in bijection with the bases of the Dehn--Sommerville matroid $DS_{2n-1}$. More precisely, $DS_{2n}\cong DS_{2n-1}\oplus {C}$, where $C$ is a coloop.
\end{lemma}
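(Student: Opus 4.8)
The plan is to prove the two halves of the statement separately, using the routing description of Theorem~\ref{routing} together with one explicit computation involving $M_{2n}$. First I would show that element $1$ is a coloop of $DS_{2n}$ (this produces the coloop summand $C$), and then that the deletion $DS_{2n}\setminus 1$ is isomorphic to $DS_{2n-1}$ under the relabelling $j\mapsto j-1$ of $\{2,\dots,2n+1\}$.

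For the coloop I would argue from the matrix. One checks that the first column of $M_{2n}$ (the entries with $j=0$) is the standard basis vector $e_1$, since $\binom{2n+1-i}{2n+1}-\binom{i}{2n+1}$ equals $1$ for $i=0$ and $0$ for $1\le i\le n$; that $M_{2n}$ has rank $n+1$, since its first $n+1$ columns restrict to a unitriangular matrix; and that every other column of $M_{2n}$ lies on the hyperplane $\sum_{i=0}^{n}(-1)^i\binom{2n+1}{i}x_i=0$, which reduces to the Vandermonde-type identity $\binom{N}{i}\binom{N-i}{k}=\binom{N}{k}\binom{N-k}{i}$ and a partial alternating binomial sum. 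Since $e_1$ is not on that hyperplane, column $1$ lies outside the span of the remaining columns, hence in every basis, so $1$ is a coloop and $DS_{2n}\cong(DS_{2n}\setminus 1)\oplus C$. (Alternatively, from Theorem~\ref{routing}: source $\mathbf 1$ and sink $1$ are the same vertex $v$, which has no incoming edge, so only the trivial path can end at sink $1$; and a cascade argument---if source $\mathbf 1$'s path leaves $v$ it forces source $\mathbf 2$ north, which forces source $\mathbf 3$ north, and so on until source $\mathbf{n+1}$ has no legal first step---shows that sink $1$ is used by every routing.)

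It then remains to identify $DS_{2n}\setminus 1$ with $DS_{2n-1}$. Writing $M_{2n}=\left(\begin{smallmatrix}1&w^{\mathsf T}\\0&M'\end{smallmatrix}\right)$, this deletion is the matroid of the $n\times 2n$ matrix $M'$ obtained from the last $n$ rows and last $2n$ columns of $M_{2n}$, with $M'_{ij}=\binom{2n-i}{2n-j}-\binom{i+1}{2n-j}$ after reindexing from $0$. Here $M'$ and $M_{2n-1}$ already agree in their first $n$ columns---which form a basis of each matroid---and differ only by a Pascal shift in the remaining columns, $\operatorname{col}_j(M')=\operatorname{col}_j(M_{2n-1})-\bigl(\binom{i}{2n-1-j}\bigr)_i$. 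The plan is to exhibit an invertible matrix $R$ and a diagonal matrix $D$ with $M_{2n-1}=R\,M'\,D^{-1}$, which forces $M(M')=M(M_{2n-1})$: the first $n$ columns pin $R$ down (it is diagonalised by those columns), and substituting this into the last $n$ columns turns the existence of a compatible $D$ into a binomial identity. Failing that, I would bypass the linear algebra via Theorem~\ref{routing}: after deleting the forced trivial path at sink $1$, a basis of $DS_{2n}\setminus 1$ is a routable sink set of the subgraph $\mathbf{DS}_{2n}-v$, and one constructs an explicit bijection with the routings of $\mathbf{DS}_{2n-1}$.

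I expect this last identification---the equality $M(M')=M(M_{2n-1})$---to be the main obstacle. Although $M'$ and $M_{2n-1}$ are visibly close, verifying that they define the same matroid (whether by checking the consistency of the scalars $D$ in the relation $M_{2n-1}=R\,M'\,D^{-1}$, or by the routing bijection) requires a genuine binomial/combinatorial computation rather than a formal manipulation. Everything else---the $e_1$ column, the rank count, the hyperplane identity, the cascade, and deducing the direct-sum statement from ``$1$ is a coloop''---is routine.
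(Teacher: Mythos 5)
Your coloop argument for element $1$ is sound and takes a genuinely different, matrix-based route from the paper's (the $e_1$ first column, the hyperplane $\sum_{i=0}^n(-1)^i\binom{2n+1}{i}x_i=0$, the unitriangular first $n+1$ columns). The cascade you offer in parentheses is the paper's argument run in reverse, by contradiction from the bottom source $\mathbf{1}$ rather than forced downward from the top source $\mathbf{n+1}$; both work. Given that $1$ is a coloop, the decomposition $DS_{2n}\cong(DS_{2n}\setminus 1)\oplus C$ is automatic. So the first half of your plan is fine.

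The genuine gap is the identification $DS_{2n}\setminus 1\cong DS_{2n-1}$, which you yourself flag as unfinished. Your primary route---finding invertible $R$ and diagonal $D$ with $M_{2n-1}=R\,M'\,D^{-1}$---can actually be made to work: for $n=2$ one finds $R=\left(\begin{smallmatrix}12&-24\\0&6\end{smallmatrix}\right)$ and diagonal entries $12,6,4,3$, and a consistent $D$ does exist in general. But proving that the scalar forced on each of the last $n$ columns agrees with the one forced by the earlier columns is a real binomial verification that you would still need to carry out, and it is more work than the problem requires. Your fallback, ``one constructs an explicit bijection with the routings of $\mathbf{DS}_{2n-1}$,'' is the right instinct but is stated as an intention, not an argument, and the one observation it needs is missing: after the cascade forces sources $\mathbf{2},\dots,\mathbf{n+1}$ to leave eastward, \emph{contract} those $n$ forced east edges. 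The contracted digraph is precisely $\mathbf{DS}_{2n-1}$ with a single extra vertical edge hanging below the bottom, carrying the trivial coloop path to sink $1$. The basis bijection is then just the relabelling $\{1,b_2,\dots,b_{n+1}\}\mapsto\{b_2-1,\dots,b_{n+1}-1\}$, with no binomial identities to check. That contraction step is the idea your proposal is missing, and it is the entire content of the paper's proof of the second half.
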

\begin{proof}
For the graph {\bf DS}$_{2n}$, any path leaving the top source node $\mathbf{\lceil\frac{2n+1}{2}\rceil} = n+1$ must first travel east along the only edge leaving it. To satisfy the condition that all paths in a routing are vertex-disjoint, any path leaving all other source nodes, excluding {\bf 1}, must also first travel east. Source node {\bf 1} is forced to have the trivial path. Thus, the horizontal edges from every source node, excluding {\bf 1}, can be contracted without affecting the potential destinations of the routings. This results in a graph equal to the graph {\bf DS}$_{2n-1}$ with an added vertical edge extending from the bottom.

It follows that the number of bases of $DS_{2n}$ is equal to the number of bases of $DS_{2n-1}$ and there exists a bijection between the bases; namely,
\begin{align*}
\{1,b_2,b_3,\dots,b_n\}\mapsto\{b_2-1,b_3-1,\dots,b_n-1\},
\end{align*}
where $\{1,b_2,b_3,\dots,b_n\}$ is a basis of $DS_{2n}$. 
\end{proof}

\begin{lemma}\label{catalan}\cite[Problem 6.19 $(i,t)$]{stan2}
Let $a_1 < a_2 < \dots < a_n$ be the upsteps of a lattice path $P$ of length $2n$. Then $P$ is a Dyck path if and only if  $$a_1 = 1, a_2 \leq 3, a_4 \leq 5, \dots, a_n \leq 2n-1.$$
\end{lemma}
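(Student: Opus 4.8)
The plan is to recast the Dyck-path condition in terms of a height function and then read the inequalities $a_i\le 2i-1$ (which is what the displayed chain $a_1=1,\ a_2\le 3,\ \dots,\ a_n\le 2n-1$ spells out) directly off the heights attained immediately before each upstep. Write the steps of $P$ as $s_1,\dots,s_{2n}\in\{+1,-1\}$ and set $h(j)=s_1+\cdots+s_j$ for $0\le j\le 2n$, with $h(0)=0$. Since $P$ has $n$ upsteps and $n$ downsteps, $h(2n)=0$, and by definition $P$ is a Dyck path exactly when $h(j)\ge 0$ for every $j$.

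The key observation is a closed form for $h$ in terms of the upstep positions: if exactly $i$ of the first $j$ steps are upsteps, then $j-i$ are downsteps, so $h(j)=i-(j-i)=2i-j$. Specializing to the position just before the $i$-th upstep, namely $j=a_i-1$, exactly $i-1$ upsteps have occurred, so $h(a_i-1)=2(i-1)-(a_i-1)=2i-1-a_i$. Hence $h(a_i-1)\ge 0$ if and only if $a_i\le 2i-1$. The forward implication is then immediate: if $P$ is a Dyck path, then $h(a_i-1)\ge 0$ for every $i$, so $a_i\le 2i-1$ for all $i$; in particular $a_1\le 1$, and since $a_1\ge 1$ we get $a_1=1$.

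For the converse, assume $a_i\le 2i-1$ for all $i$; I must show $h(j)\ge 0$ for all $j$. The content is that these inequalities, imposed only at the positions preceding upsteps, already pin down the global minimum of $h$, because $h$ decreases on each maximal run of downsteps. Concretely, fix $j\ge 1$ and let $i$ be the number of upsteps among $s_1,\dots,s_j$, so $h(j)=2i-j$ by the key observation; since $a_1=1$ we have $i\ge 1$. If $i<n$, the $(i+1)$-st upstep occurs after position $j$, so $j\le a_{i+1}-1\le (2(i+1)-1)-1=2i$, giving $h(j)=2i-j\ge 0$; if $i=n$, then $j\le 2n=2i$ and again $h(j)\ge 0$. (Equivalently, by contradiction: a first visit to height $-1$ would occur at a position $j$ of the form $j=2i+1$, where $i$ is the number of upsteps seen so far, which forces $a_{i+1}\ge 2i+2>2(i+1)-1$, contradicting the hypothesis.)

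The argument is elementary end to end; I do not anticipate a real obstacle. The only point that needs care is the reduction in the converse — that it suffices to control $h$ at the positions just before upsteps, the endpoint being automatic since $P$ has equally many up- and downsteps — and the identity $h(j)=2i-j$ together with the bound $a_{i+1}\le 2i+1$ makes this transparent.
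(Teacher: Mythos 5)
The paper does not supply a proof of this lemma; it cites the statement from Stanley's \emph{Enumerative Combinatorics} (Problem~6.19), so there is no in-paper argument to compare against. Your height-function proof is correct and self-contained: the identity $h(j)=2i-j$, where $i$ counts the upsteps among the first $j$ steps, gives $h(a_i-1)=2i-1-a_i$, which settles the forward direction at once; and for the converse you correctly observe that if the first $j\ge 1$ steps contain $i<n$ upsteps then $j\le a_{i+1}-1\le 2i$, while the remaining case $i=n$ gives $j\le 2n=2i$, so $h(j)=2i-j\ge 0$ throughout. The key reduction --- that it suffices to check $h\ge 0$ at positions just before an upstep, since $h$ only decreases along a run of downsteps --- is exactly the right observation and is handled cleanly. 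One cosmetic note: the displayed chain in the lemma as printed reads ``$a_4\le 5$'' where it should read ``$a_3\le 5$''; your uniform formulation $a_i\le 2i-1$ implicitly corrects this.
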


\begin{lemma}\label{ds}
Let $B=\{b_1 < b_2 < \dots < b_{n+1}\}$ be a subset of $[2n+1]$. Then $B$ is a basis of $DS_{2n}$ if and only if $b_1 =1, b_2 \leq 3, b_3\leq 5, \dots, b_{n+1} \leq 2n+1$. 
\end{lemma}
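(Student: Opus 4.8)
The goal is to characterize the bases of $DS_{2n}$ by the inequalities $b_1 = 1$, $b_2 \le 3$, $b_3 \le 5$, \dots, $b_{n+1} \le 2n+1$. By Theorem~\ref{routing}, $B = \{b_1 < \dots < b_{n+1}\}$ is a basis of $DS_{2n}$ if and only if there is a routing (a set of vertex-disjoint directed paths) in the graph $\mathbf{DS}_{2n}$ from the source set $\{1,2,\dots,n+1\}$ to the sink set $B$. So the entire lemma reduces to a purely combinatorial statement about when such a routing exists, and I plan to prove it directly from the geometry of the triangular grid $\mathbf{DS}_{2n}$.

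First I would set up coordinates matching Figure~\ref{digraphex}: the source node $\mathbf{i}$ sits on the southwest border, source $\mathbf{1}$ is the corner shared with sink $1$, and a path from source $\mathbf{i}$ must travel east (possibly after going north from lower sources, but source $\mathbf{i}$ itself is the $i$-th node up the hypotenuse) to reach a sink on the east border. The key structural observation, already exploited in Lemma~\ref{bij}, is that in a routing the sources must map to the sinks in increasing order: source $\mathbf{1} \to b_1$, source $\mathbf{2} \to b_2$, \dots, source $\mathbf{n+1} \to b_{n+1}$ (this is forced because the paths are vertex-disjoint and the grid is planar, so paths cannot cross). Then I would establish two things: (a) a single path from source $\mathbf{i}$ to sink $j$ exists in $\mathbf{DS}_{2n}$ if and only if $j \ge i$ — more precisely, source $\mathbf{i}$ can reach sink $j$ iff $j$ lies at height at least that of source $\mathbf{i}$, and the structure of the triangular region means source $\mathbf{i}$ enters the east column at a position depending on how far east it first travels; and (b) the vertex-disjointness constraint, combined with the triangular (as opposed to rectangular) shape, is exactly what produces the upper bounds $b_i \le 2i-1$.

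The heart of the argument is (b). I would argue that given source-to-sink assignment $\mathbf{i}\to b_i$ with $b_1 < \dots < b_{n+1}$, a vertex-disjoint routing exists if and only if each path can be ``pushed'' as far southeast as possible without colliding with the one below it; because the domain is the triangle, the lowest admissible path for source $\mathbf{i}$ occupies certain forced vertices, and the next path up must route around it. Concretely, I expect that the southernmost vertex-disjoint configuration forces source $\mathbf{i}$ to reach at best sink $2i-1$ (for the ``staircase'' Dyck-path-like extremal routing), giving $b_i \le 2i-1$; conversely, if all these inequalities hold one can greedily construct the routing from the bottom up, sending each source east as early as possible and then north, and vertex-disjointness is automatic. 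The necessity of $b_1 = 1$ is immediate since source $\mathbf{1}$ has only the trivial length-zero path available (as noted in Lemma~\ref{bij}). An alternative, possibly cleaner route: compare $\mathbf{DS}_{2n}$ directly with the graph whose routings encode Dyck paths, and invoke Lemma~\ref{catalan} — but since Lemma~\ref{catalan} is about Dyck paths of length $2n$ and here we want length $2(n+1)$ shifted, the bookkeeping must be done carefully, and I would rather keep the routing argument self-contained.

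The main obstacle I anticipate is making the ``push the paths southeast'' intuition rigorous: I need a clean lemma saying that a planar routing between increasing source and sink sequences in the triangular grid exists iff the greedy bottom-up routing succeeds, and then an exact computation of which sink the greedy $i$-th path reaches when the sinks below it are as small as allowed. This is a finite, inductive argument on $n$ (or on the number of paths), but getting the indexing of the triangular grid exactly right — so that the bound comes out as $2i-1$ and not $2i$ or $2i-2$ — is the delicate part, and I would verify it against the $d=4$ case of Examples~\ref{ex1} and~\ref{routings}, where the bases $\{123,124,125,134,135\}$ indeed satisfy $b_1=1$, $b_2\le 3$, $b_3\le 5$.
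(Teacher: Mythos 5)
Your plan matches the paper's overall strategy: reduce to routing existence via Theorem~\ref{routing}, observe that planarity forces source $\mathbf{i}\mapsto b_i$, get $b_1=1$ from the triviality of source~$\mathbf{1}$'s only path, and prove sufficiency by a bottom-up greedy construction keeping each path as low as possible. That sufficiency argument is essentially identical to the paper's.

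The gap is in necessity, and it is exactly the ``main obstacle'' you flagged. You propose to show that a routing exists iff the greedy bottom-up routing succeeds, then compute where greedy gets stuck; but that ``greedy iff any'' lemma is extra machinery, and your stated heuristic (``the southernmost vertex-disjoint configuration forces source $\mathbf{i}$ to reach at best sink $2i-1$'') does not actually bound $b_i$ from above --- the southernmost routing for a \emph{given} sink set tells you nothing about which sink sets are unreachable. The paper instead uses a direct counting obstruction: the northwest antidiagonal through sink $2i$ contains only $n-i$ vertices besides sink $2i$ itself, yet if $b_i\geq 2i$ then the $n+1-i$ vertex-disjoint paths from sources $\mathbf{i+1},\dots,\mathbf{n+1}$ must all end at sinks $\geq 2i+1$ and hence must each cross that antidiagonal at a distinct vertex, which is impossible by pigeonhole. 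This bottleneck count is the crisp lemma you were looking for; it replaces both of the steps you anticipated needing (the greedy-iff-routing reduction and the extremal computation), and it also makes the indexing come out cleanly as $2i-1$ without delicate case analysis.
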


\begin{proof}
First let us prove the forward direction. Assume $B=\{b_1 < b_2 < \dots < b_{n+1}\}$ is a basis of $DS_{2n}$. Assume $b_i\geq 2i$. Notice that the northwest diagonal starting at sink $2i$ has $n-i$ vertices (excluding sink $2i$). If there was a routing, the $n+1-i$ paths starting at source nodes $\mathbf{i+1, \dots, n+1}$ would pass through this northwest diagonal, which forms a bottleneck of width $n-i$, contradicting the presence of a routing. Thus, assuming a routing exists from sources $[\mathbf{i}]$ to sinks $b_1,\dots, b_i$, the remaining source nodes $\mathbf{i+1,\dots, n+1}$ can only be routed if $b_i\leq 2i-1$. See Figure \ref{break} for an example of a sink set that creates a bottleneck when $2n=10$ and $b_4=8$.

To prove the other direction, assume $b_1 =1, b_2 \leq 3, b_3\leq 5, \dots, b_{n+1} \leq 2n+1$. Construct a routing sequentially from the bottom, making each path as low as possible. All routings must begin with $b_1=1$. For each $i$, we need to check whether the number of nodes remaining on the northwest diagonal containing $b_i$ is greater than or equal to the number of remaining sink nodes. Since $b_i\leq 2i-1$, then by our previous argument no bottlenecks are formed and this gives us a valid routing, so $B=\{b_1 < b_2 < \dots < b_{n+1}\}$ is a basis of $DS_{2n}$.
\end{proof}

\begin{proof}[Proof of Theorem \ref{main}]
This follows immediately from Lemmas \ref{bij}, \ref{catalan}, and \ref{ds}.
\end{proof}

Note that Theorem \ref{main2} describes in words the bases of the matroids involved in Theorem \ref{main}.

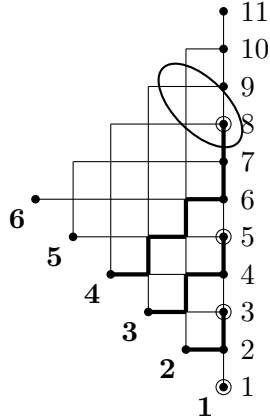
\begin{figure}[htbp]
\begin{center}

\begin{tikzpicture}

\draw[ultra thick] (2,1/2) -- (2.5,1/2) ;
\draw[ultra thick] (2.5,1/2) -- (2.5,1) ;
\draw[ultra thick] (1.5,1) -- (2,1) ;
\draw[ultra thick] (2,1) -- (2,1.5) ;
\draw[ultra thick] (2,1.5) -- (2.5,1.5) ;
\draw[ultra thick] (2.5,1.5) -- (2.5,2) ;

\draw[ultra thick] (1,1.5) -- (1.5,1.5) ;
\draw[ultra thick] (1.5,1.5) -- (1.5,2) ;
\draw[ultra thick] (1.5, 2) -- (2,2) ;
\draw[ultra thick] (2,2) -- (2,2.5) ;
\draw[ultra thick] (2,2.5) -- (2.5,2.5) ;
\draw[ultra thick] (2.5,2.5) -- (2.5,3.5) ;

\draw[black,thick, rotate=45] (4.2,1.1) ellipse (10pt and 20pt);


\draw[thin] (0,5/2) -- (5/2,5/2) ;
\draw[thin] (1/2,2) -- (5/2,2) ;
\draw[thin] (1,3/2) -- (5/2,3/2) ;
\draw[thin] (3/2,1) -- (5/2,1) ;
\draw[thin] (2,1/2) -- (5/2,1/2) ;
\draw[thin] (5/2,0) -- (5/2,5) ;
\draw[thin] (1/2,3) -- (5/2,3) ;
\draw[thin] (1,7/2) -- (5/2,7/2) ;
\draw[thin] (3/2,4) -- (5/2,4) ;
\draw[thin] (2,9/2) -- (5/2,9/2) ;
\draw[thin] (2,1/2) -- (2,9/2) ;
\draw[thin] (3/2,1) -- (3/2,4) ;
\draw[thin] (1,3/2) -- (1,7/2) ;
\draw[thin] (1/2,2) -- (1/2,3) ;
\draw[black] (2.5,0) circle (.1cm);
\draw[black] (2.5,1) circle (.1cm);
\draw[black] (2.5,2) circle (.1cm);
\draw[black] (2.5,3.5) circle (.1cm);
\draw[black, fill] (5/2,0) circle (.05cm) node[anchor = north east]{\bf{1}} node[anchor = west]{\:1} ;
\draw[black, fill] (2,1/2) circle (.05cm) node[anchor = north east]{\bf{2}};
\draw[black, fill] (3/2,1) circle (.05cm) node[anchor = north east]{\bf{3}};
\draw[black, fill] (1,3/2) circle (.05cm) node[anchor = north east]{\bf{4}};
\draw[black, fill] (1/2,2) circle (.05cm) node[anchor = north east]{\bf{5}};
\draw[black, fill] (0,5/2) circle (.05cm) node[anchor = north east]{\bf{6}};
\draw[black, fill] (5/2,1/2) circle (.05cm) node[anchor = west]{\:2};
\draw[black, fill] (5/2,1) circle (.05cm) node[anchor = west]{\:3};
\draw[black, fill] (5/2,3/2) circle (.05cm) node[anchor = west]{\:4};
\draw[black, fill] (5/2,2) circle (.05cm) node[anchor = west]{\:5};
\draw[black, fill] (5/2,5/2) circle (.05cm) node[anchor = west]{\:6};
\draw[black, fill] (5/2,3) circle (.05cm) node[anchor = west]{\:7};
\draw[black, fill] (5/2,7/2) circle (.05cm) node[anchor = west]{\:8};
\draw[black, fill] (5/2,4) circle (.05cm) node[anchor = west]{\:9};
\draw[black, fill] (5/2,9/2) circle (.05cm) node[anchor = west]{\:10};
\draw[black, fill] (5/2,5) circle (.05cm) node[anchor = west]{\:11};
\end{tikzpicture}
\end{center}
\caption{Bottleneck prevents a routing for sinks (1,3,5,8,*,*) for $d = 2n = 10$.\label{break}}
\end{figure}

\noindent \textbf{Remark.} Bj\"{o}rner \cite{bjd} conjectured and Bj\"{o}rklund and Engst\"om \cite{eng} proved that $M_d$ is totally non-negative, which implies $DS_d$ is a positroid. A corollary to our main theorem is that the Catalan matroid is a positroid, a result recently discovered by Pawlowski \cite{paw} via pattern avoiding permutations. Thus, there are several positroid representations introduced by Postnikov \cite{pos} that can be used to further represent the Catalan matroid. 
Using Lemma \ref{ds}, one can verify that the decorated permutation corresponding to the positroid $\mathbf{C}_{n+1}$ is

\begin{align*}
   \bar{1}35\cdots (2n+1)246\cdots (2n)  \:\:\:  & \text{when} \:\:\:DS_{2n} \cong \mathbf{C}_{n+1}\backslash (2n+2)\\
     246 \cdots (2n)135\cdots (2n-1)  \:\:\: & \text{when} \:\:\:  DS_{2n-1} \cong \mathbf{C}_{n+1}\backslash 1\backslash (2n+2) \\
\end{align*} 
for $n\in\N$.

We can use this relation to obtain the Grassmann necklace, Le diagram and juggling pattern corresponding to $\mathbf{C}_{n+1}$.

\smallskip

\noindent \textbf{Acknowledgments.} This work is part of Anastasia Chavez's Ph.D. thesis at the University of California, Berkeley and Nicole Yamzon's Master's thesis at San Francisco State University. We are very thankful to our adviser, Dr.~Federico Ardila for his guidance throughout this project. Thank you to Dr.~Lauren Williams for the helpful conversations.

\nocite{*}
\bibliographystyle{plain}
\bibliography{DSCatalan.bib}
\label{sec:biblio}

\end{document}